\documentclass[letterpaper, onecolumn, 10pt, conference]{ieeeconf}  

\IEEEoverridecommandlockouts                              

\usepackage{amsmath}
\usepackage{amssymb}
\usepackage{amsfonts}
\usepackage{graphicx}
\usepackage{lipsum}
\usepackage{epstopdf}
\usepackage{algorithmic}
\usepackage{mathptmx}
\usepackage{datetime}
\usepackage{hyperref}
\usepackage{color}

\newtheorem{lem}{Lemma}
\newtheorem{thm}{Theorem}

\DeclareMathAlphabet{\bit}{OML}{cmm}{b}{it}

\def\fH{\mathfrak{H}}

\def\<{\leqslant}           
\def\>{\geqslant}           

\def\d{\partial}
\def\wh{\widehat}
\def\wt{\widetilde}

\def\Re{\mathrm{Re}}   
\def\Im{\mathrm{Im}}   

\def\col{\mathrm{vec}}   
\def\cH{\mathcal{H}}   

\def\mR{\mathbb{R}}    
\def\mC{\mathbb{C}}    

\def\Tr{\mathrm{Tr}}       
\def\rT{\mathrm{T}}        
\def\cov{\mathbf{cov}}

\def\eps{\epsilon}

\def\rank{\mathrm{rank}}       

\def\diam{\diamond}       
\def\bS{\mathbf{S}}         

\def\bE{\mathbf{E}}    

\def\bH{\mathbf{H}}    

\def\[[[{[\![\![}   
\def\]]]{]\!]\!]}   

\def\bra{{\langle}}
\def\ket{{\rangle}}

\def\Bra{\Big\langle}
\def\Ket{\Big\rangle}

\def\re{\mathrm{e}}        
\def\rd{\mathrm{d}}        

\def\lexp{\mathop{\overleftarrow{\exp}}}

\def\bJ{\mathbf{J}}

\def\x{\times}
\def\ox{\otimes}

\def\fF{\mathfrak{F}}

\def\cZ{\mathcal{Z}}

\def\cI{\mathcal{I}}

\def\cA{\mathcal{A}}

\def\cE{\mathcal{E}}

\def\Ups{\Upsilon}
\def\ups{\upsilon}

\begin{document}
\title{\LARGE \bf 
Pointwise 
and Dynamic Programming  
Control Synthesis for  
Finite-Level Open Quantum 
Memory Systems${}^*$}

\author{Igor G. Vladimirov$^{1}$, \quad Ian R. Petersen$^{1}$, \quad Guodong Shi$^{1,2}$%
\thanks{${}^*$This work is supported by the Australian Research Council grant DP240101494.}
\thanks{$^{1}$School of Engineering, Australian National University, Canberra, ACT,
Australia:
        {\tt igor.g.vladimirov@gmail.com,
        i.r.petersen@gmail.com}.}%
\thanks{$^{2}$Australian Centre for Robotics, University of Sydney,
Camperdown, Sydney, NSW,
Australia:
        {\tt guodong.shi@sydney.edu.au}.}%
}

\maketitle
\thispagestyle{empty}
\pagestyle{plain}

\begin{abstract}                
This paper is concerned with finite-level quantum memory systems for retaining initial dynamic variables  in the presence of external quantum noise. The system variables have an algebraic structure, similar to that of the Pauli matrices, and their Heisenberg picture evolution is governed by a quasilinear quantum stochastic differential equation.  The latter involves a Hamiltonian whose parameters depend affinely on a classical control signal in the form of a deterministic function of time. The memory performance is quantified by a mean-square deviation of quantum system variables of interest from their initial conditions.  We relate this functional to a matrix-valued state of an auxiliary classical control-affine dynamical  system. This leads to a pointwise control design where the control signal minimises the time-derivative of the mean-square deviation with an additional  quadratic penalty on the control. In an  alternative finite-horizon   setting with a   terminal-integral cost functional, we apply dynamic programming and obtain a quadratically nonlinear Hamilton-Jacobi-Bellman equation,  for which a solution is outlined in the form of a recursively computed asymptotic expansion. 
\end{abstract}


\section{INTRODUCTION}

Technological applications of quantum mechanics, including quantum communication, quantum information processing and quantum computation architectures  \cite{NC_2000}, aim to employ its specific features as potential  nonclassical resources. One of them is the reversibility of dynamics of isolated quantum systems, which comes from the unitary evolution postulate \cite{S_1994}.  Such dynamics are determined by the energetics of internal interactions in the system specified by its Hamiltonian. The latter, as a  function of operator-valued quantum system variables, combined with their algebraic properties, gives rise to a particular form of the Heisenberg picture equations of motion for an isolated quantum system. Such a system provides a perfect storage for those quantum variables which commute with the Hamiltonian and thus remain constant over the course of time. This property manifests itself at full capacity in the case of zero Hamiltonian, where the quantum state and all the dynamic variables of the system are conserved quantities. 

However, the complete isolation of a quantum system from its environment is practically impossible. Because of unavoidable interaction with external fields, the system acquires irreversible dynamics and deviates from its initial conditions even in the zero-Hamiltonian case. Quantum noise makes the  ability of the resulting open system to retain the initial quantum information fade away, giving  rise to various decoherence effects \cite{BP_2006,CL_1985,GZ_2004,U_1995}.

The open quantum system dynamics, including the interaction with external fields, can be modelled by using Hudson-Parthasarathy  quantum stochastic differential equations (QSDEs) \cite{HP_1984,P_1992} driven by a noncommutative quantum counterpart of the classical Wiener process. These and other tools of quantum stochastic calculus  allow for  the modelling of  coherent (through direct energy and field-mediated coupling) or measurement-based system interconnections  and control protocols  \cite{GJ_2009,NG_2015,WM_2010,ZJ_2011a}. The energy and coupling parameters of the resulting composite system can be varied in order to improve its performance, for example, in the sense of minimising a relevant cost functional.  The latter is organised as the mean-square deviation of quantum system variables of interest from their initial conditions in quantum memory optimisation problems \cite{VP_2024_ANZCC,VP_2024_EJC,VPS_2025_CDC,VPS_2025_IFAC}. They also employ a related criterion of maximising the decoherence time at which the mean-square deviation achieves a critical threshold value.  This approach applies to continuous variables systems, such as open quantum harmonic oscillators governed by linear QSDEs, and finite-level systems, which are described by quasilinear QSDEs \cite{EMPUJ_2016,VP_2022_SIAM} and are particularly relevant to the modelling of multiqubit systems. For these classes  of open quantum systems subjected to vacuum fields, the first, second and higher-order  multipoint   moments of dynamic variables lend themselves to efficient computation, thus contributing  to the tractability of the above mentioned quantum memory optimisation problems which were concerned with coherent and measurement-based settings.

The present paper considers the optimisation of a finite-level open quantum memory system,  whose dynamic variables have an algebraic structure, similar to that of the Pauli matrices \cite{S_1994}, and are governed by a quasilinear QSDE.  The system 
Hamiltonian is a linear function of the system variables, and its coefficients  
depend affinely 
on a classical control signal in the form of a deterministic function of time.  
The latter makes the control setting under consideration different from the coherent and measurement-based formulations mentioned above. The mean-square deviation of selected quantum system variables from their initial values, which quantifies the memory performance, is represented in terms of a matrix-valued state of an auxiliary classical control-affine \cite{S_1998}  dynamical  system. We use this representation for a pointwise optimal control design where, at every moment of time,  
the control signal minimises the sum of the time-derivative of the mean-square deviation and an additional  quadratic penalty on the control. An  alternative yet related finite-horizon optimal control problem is also discussed which involves minimising the sum of the terminal mean-square  deviation of the memory system variables from their initial values and an integral quadratic penalty on the control signal.  We apply dynamic programming and obtain a quadratically nonlinear Hamilton-Jacobi-Bellman equation (HJBE),  for which a solution is outlined in the form of a recursively computed asymptotic expansion.   
Note that the approach of this paper differs from the feedback stabilisation of quantum states   for multiqubit systems \cite{ASDSMR_2013,LAM_2022} since we quantify the system performance in the Heisenberg picture terms, use a deterministic control pattern and employ different optimisation methods.

The paper is organised as follows.
Section~\ref{sec:mem} specifies the class of finite-level open quantum memory systems with deterministic control being considered. 
Section~\ref{sec:dyn} describes the evolution of the system variables and their first two moments. 
Section~\ref{sec:dev} uses them for computing the mean-square deviation of the system variables of interest from initial conditions. 
Section~\ref{sec:aux} represents this functional in terms of a matrix-valued state of an auxiliary dynamical system. 
Section~\ref{sec:lyap} discusses the pointwise minimisation of the time derivative of the mean-square deviation with a quadratic penalty on the control. 
Section~\ref{sec:opt} applies dynamic programming to finite-horizon optimal control synthesis using the  terminal mean-square deviation and an integral quadratic control penalty. 
Section~\ref{sec:conc} makes concluding remarks.

\section{FINITE-LEVEL OPEN QUANTUM MEMORY SYSTEM}
\label{sec:mem}

We consider an open  quantum system with internal dynamic variables $X_1(t), \ldots, X_n(t)$  which are self-adjoint operators on a Hilbert space $\fH$ specified below. They  depend on time $t\> 0$ and  are assembled into a column-vector $X(t) := (X_k(t))_{1\< k \< n}$  (the time arguments will often be omitted).  The system is intended to be employed as a quantum memory. In its storage phase, it has to retain (at least approximately  over a bounded time interval) the initial system variables. The latter  comprise the vector 
\begin{equation}
\label{X0}
    X_0 := X(0) 
\end{equation}
and are organised as Hermitian matrices acting on a finite-dimensional complex Hilbert space $\fH_0$. 
These  operators are assumed to possess algebraic closedness, which is preserved in time by the system evolution and represented in the vector-matrix form \cite{VP_2022_SIAM,VP_2024_EJC} as 
\begin{equation}
\label{Xalg}
    X(t)X(t)^\rT = \alpha  + \beta\cdot X(t),
    \qquad
    \beta\cdot X 
  := 
  \sum_{\ell=1}^n \beta_\ell X_\ell, 
    \qquad
    t \> 0. 
\end{equation}
This means that their  pairwise products are affine functions of the system variables: $X_jX_k = \alpha_{jk}\cI  + \sum_{\ell = 1}^n \beta_{jk\ell} X_\ell$, where $\cI$ is the identity operator on the space $\fH$.  
Here, $\alpha := (\alpha_{jk})_{1\< j,k\< n}= \alpha^\rT \in \mR^{n\x n}$ is a real symmetric matrix, and  
$\beta:= (\beta_{jk\ell})_{1\< j,k, \ell\< n} \in \mC^{n\x n\x n}$ is a complex array whose sections $\beta_\ell:= (\beta_{jk\ell})_{1\< j,k\< n} = \beta_\ell^* \in \mC^{n\x n}$ are Hermitian matrices, with $(\cdot)^*:= \overline{(\cdot)}{}^\rT$  the complex conjugate transpose.  Accordingly, the matrix $\alpha$ in (\ref{Xalg})  represents the tensor product $\alpha \ox \cI = (\alpha_{jk}\cI)_{1\< j,k\< n}$. In a similar fashion,  $\beta_\ell X_\ell:= \beta_\ell \ox X_\ell = (\beta_{jk\ell}X_\ell)_{1\< j,k\< n}$ is an $(n\x n)$-matrix of scaled  copies of the operator $X_\ell$. The array
\begin{equation}
\label{Theta}
    \Theta 
    : =
    (\theta_{jk\ell})_{1\< j,k,\ell \< n}  
    :=
    \Im \beta 
    \in \mR^{n\x n\x n}  
\end{equation}
has real antisymmetric sections (due to $\beta_1, \ldots, \beta_n$  being Hermitian) 
\begin{equation}
\label{Thetaell}
    \Theta_\ell
    :=
    (\theta_{jk\ell})_{1\< j,k\< n}
    =     
    \Im \beta_\ell
    =
    -\Theta_\ell^\rT
    \in
    \mR^{n\x n}   
\end{equation}
and parameterises the  one-point canonical commutation relations (CCRs) 
\begin{equation}
\label{XXcomm}
    [X,X^\rT]
      :=
    ([X_j,X_k])_{1\< j,k\< n}
    =
    2i \Theta \cdot X
\end{equation}
for the system variables,  where 
$[\xi,\eta]:= \xi\eta -\eta\xi$ is the commutator of linear operators, and $i:= \sqrt{-1}$ is the imaginary unit.  

For example, 
an open  qubit system has  the initial space $\fH_0:= \mC^2$ 
and $n=3$ dynamic variables whose initial conditions are provided by the Pauli matrices \cite{S_1994}
$$    
    X_1(0):=
    {\begin{bmatrix}
      0 & 1\\
      1 & 0
    \end{bmatrix}},
    \qquad 
    X_2(0)
    :=
    -i\bJ,
    \qquad
    X_3(0):=
    {\begin{bmatrix}
      1 & 0\\
      0 & -1
    \end{bmatrix}},
$$
where
\begin{equation}
\label{bJ}
        \bJ
        : =
        {\begin{bmatrix}
        0 & 1\\
        -1 & 0
    \end{bmatrix}}. 
\end{equation}
They have the algebraic structure (\ref{Xalg}) 
with $\alpha = I_3$ and $\beta = i \Theta$, where the CCR array $\Theta \in\{0, \pm1\}^{3\x 3\x 3}$ in  
(\ref{Theta}) consists of the Levi-Civita symbols   $\theta_{jk\ell} = \eps_{jk\ell}$.

In general, with the algebraic structure (\ref{Xalg}) allowing any polynomial of the system variables to be reduced to an affine function of $X$, the Hamiltonian (which specifies the  internal energy of the quantum system 
\cite{S_1994})  is a linear function of the system variables:
\begin{equation}
\label{HE}
    H
    :=
    E^\rT X
    =
    H_* + \sum_{k = 1}^r U_k H_k,     
\end{equation}
where $E$ is an $\mR^n$-valued energy vector. The latter depends on an $\mR^r$-valued external control signal $U:= (U_k)_{1\< k \< r}$ as
\begin{equation}
\label{EU}
  E := E_* + K U , 
\end{equation}
so that $U$ plays the role of a time-varying classical  parameter of the system Hamiltonian $H$. 
Here, $E_*\in \mR^n$ and $K\in \mR^{n \x r}$ are given constant vector and matrix. 
Accordingly, $H_*$ in (\ref{HE}) is the uncontrolled part of $H$ (which the system would have in the case of $U=0$), while $H_1, \ldots, H_r$ are control Hamiltonians: 
\begin{equation}
\label{HHH}
  H_* := E_*^\rT X,
  \qquad
  H_k := K_k^\rT X,
  \qquad
  k = 1, \ldots, r,  
\end{equation}
where $K_1, \ldots, K_r \in \mR^n$ are the columns of the  matrix $K$ from (\ref{EU}). For what follows, the control signal $U$ is assumed  to be a sufficiently smooth deterministic function of time (it will be found as a solution of optimisation problems in Sections~\ref{sec:lyap} and \ref{sec:opt}).  Note that, in the case of  isolated quantum systems, the affine dependence of the Hamiltonian on deterministic control,  such as in (\ref{HE}),   leads to a bilinear Schr\"{o}dinger equation and related controllability problems (see, for example,  \cite{LBS_2025} and references therein). However, we are concerned with open quantum dynamics.

More precisely, in addition to the internal dynamics being subject to deterministic control, the system  is also affected by external bosonic fields.   
The latter are modelled by a multichannel quantum Wiener process $W:= (W_k)_{1\< k \< m}$ consisting of an even number $m$ of time-varying self-adjoint operators $W_1, \ldots, W_m$  on a symmetric Fock space \cite{PS_1972} $\fF$. The quantum processes $W_1, \ldots, W_m$  have a complex positive semi-definite Hermitian Ito matrix  $\Omega = \Omega^* \succcurlyeq 0$:
\begin{equation}
\label{Omega}
    \rd W \rd W^\rT
    = \Omega \rd t,
    \qquad
    \Omega: = I_m + iJ,
    \qquad
    J
    :=
    I_{m/2}\ox\bJ,
\end{equation}
where $I_s$ denotes the identity matrix of order $s$, and $\bJ$ is given by (\ref{bJ}). 
The nonzero matrix $J= \Im \Omega$ in (\ref{Omega}) comes from the noncommutative nature of $W$  and parameterises the two-point CCRs 
\begin{equation}
\label{WWcomm}
    [W(s), W(t)^\rT]
    =
    2i\min(s,t) J,
    \qquad
    s,t\>0.
\end{equation}
The initial system space $\fH_0$ and the Fock space $\fF$, mentioned above,  give rise to the tensor-product Hilbert space 
\begin{equation}
\label{fH}
    \fH := \fH_0 \ox \fF 
\end{equation}
for the action of the system and field operators. The system-field quantum state on $\fH$ is assumed to have the product structure 
\begin{equation}
\label{rho}
    \rho:= \rho_0\ox \ups,
\end{equation}
where $\rho_0$ is the initial system state on $\fH_0$, and $\ups$ is the vacuum field state \cite{P_1992} on $\fF$, which are positive semi-definite self-adjoint unit-trace density operators on $\fH_0$ and $\fF$, respectively.

In addition to the Hamiltonian (\ref{HE}) as the self-energy of the system, the energetics of the system-field interaction is specified by a vector
\begin{equation}
  \label{LMN}
    L:= MX + N
\end{equation}
of $m$ self-adjoint coupling operators,  which are affine functions of the system variables parameterised by $M \in \mR^{m\x n}$, $N\in \mR^m$.   

The Heisenberg picture dynamics of the open quantum system (see Fig.~\ref{fig:open})  
\begin{figure}[htbp]
\centering
\unitlength=1.2mm
\linethickness{0.5pt}
\begin{picture}(40.00,10)
    \put(15,0){\framebox(10,10)[cc]{\scriptsize memory}}
    \put(35,5){\vector(-1,0){10}}    
    \put(5,5){\vector(1,0){10}}    
    \put(2,5){\makebox(0,0)[cc]{$U$}}
    \put(38,5){\makebox(0,0)[cc]{$W$}}
\end{picture}\vskip-1mm
\caption{An open quantum memory system governed by (\ref{dX}), which is driven by the external quantum noise $W$ and affected by the control signal $U$.}
\label{fig:open}
\end{figure}
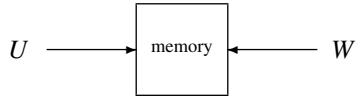
are governed by a quasilinear Hudson-Parthasarathy QSDE  \cite{EMPUJ_2016,VP_2022_SIAM}:
\begin{equation}
\label{dX}
    \rd X  = (AX+b) \rd t + B(X)\rd W. 
\end{equation}
Here, the matrix $A\in \mR^{n\x n}$ is a deterministic function of time $t\> 0$, while $b \in \mR^n$ is a constant vector,  and $B(X)$ is an $(n\x m)$-matrix  of self-adjoint operators depending linearly on the system variables as specified below.   
By an appropriate adaptation of \cite[Theorem 3.1]{VP_2022_SIAM} and \cite[Lemma~4.2 and Theorem~6.1]{EMPUJ_2016}, the coefficients    of the QSDE (\ref{dX}) are computed in terms of the algebraic structure constants $\alpha$, $\beta$  from (\ref{Xalg}), the CCR matrices $\Theta_1, \ldots, \Theta_n$ and $J$ in (\ref{Theta})--(\ref{XXcomm}),  (\ref{Omega}), (\ref{WWcomm}),  and also the energy and coupling parameters $E$, $M$, $N$ from (\ref{HE})--(\ref{HHH}), (\ref{LMN}) as 
\begin{align}
\label{A_b}
    A
    & :=
    A_* + \wt{A},
    \qquad 
    b
      :=
      2
      \begin{bmatrix}
        \Theta_1\   
        \ldots \  
        \Theta_n
    \end{bmatrix}
    \col(M^\rT J M \alpha),\\
\label{BX}
    B(X)
     & := 2(\Theta \cdot X)M^\rT. 
\end{align}
Here, $\col(\cdot)$ is the vectorization \cite{M_1988},    and use is made of auxiliary matrices 
\begin{align}
\label{A*}
    A_*
    & :=    
        2
        \Theta \diam (E_* + M^\rT JN)
        +
    2
    \sum_{\ell = 1}^n
    \Theta_\ell
    M^\rT
    (
        M\theta_{\ell\bullet \bullet}
        +
        J M\Re \beta_{\ell\bullet \bullet}),\\
\label{At}
    \wt{A}
    & :=   
    2\Theta \diam (KU)
    =  
    \sum_{k=1}^r
    U_k A_k = \cA \cdot U, 
\end{align}
where $\cA \in \mR^{n\x n \x r}$ is an array with sections $A_1, \ldots, A_r \in \mR^{n\x n}$ given by
\begin{equation}
\label{Ak}
    A_k
     :=    
    2\Theta \diam K_k 
    =
    2
    \begin{bmatrix}
      \Theta_1 K_k & 
      \ldots & 
      \Theta_n K_k
    \end{bmatrix},
    \qquad
    k = 1, \ldots, r.   
\end{equation}
The relations (\ref{A*})--(\ref{Ak}) employ a different product $\diam$ defined by 
\begin{equation}
\label{diam}
    \Theta \diam u
    :=
    \begin{bmatrix}
        \Theta_1 u & \ldots & \Theta_n u
    \end{bmatrix}. 
\end{equation}
As mentioned in \cite{VP_2022_SIAM}, it is related to the product $\cdot$ in (\ref{Xalg}), (\ref{XXcomm}) by 
\begin{equation}
\label{cdotdiam}
  (\Theta\cdot u)v
  =
  (\Theta\diam v)u
  =
  \begin{bmatrix}
    \Theta_1 & \ldots &  \Theta_n
  \end{bmatrix}
  (u\ox v)
\end{equation}
(with $\ox$ the Kronecker product)
for any vectors $u$, $v$  of $n$ quantum variables with zero cross-commutations: $[u,v^\rT] = 0$. The latter includes the case when at least one of $u$ or $v$ is a $\mC^n$-valued vector. While the matrix $A_*$ in (\ref{A*}) is constant, the matrix $A = A(t)$ in (\ref{A_b}) acquires dependence on time $t$ from the control signal $U$ through the matrix $\wt{A} = \wt{A}(t)$ in  (\ref{At})  and reduces to $A_*$ when $U=0$ since $\wt{A}$ depends on $U$ in a linear fashion. Note that $U$ enters the system dynamics (\ref{dX}) through the matrix $\wt{A}$ only.

\section{EVOLUTION OF SYSTEM VARIABLES AND THEIR MOMENTS}
\label{sec:dyn}

By a variation-of-constants argument as in \cite[Theorem 3.2]{VP_2022_SIAM} (with the only modification that the time dependence of the matrix $A$ is taken into account), the solution of the quasilinear QSDE (\ref{dX})   for the system variables with the initial condition (\ref{X0}) is represented as
\begin{equation}
\label{Xts}
    X(t) = \cE(t,0)X_0 + \int_0^t \cE(t,s)\rd s b,
    \qquad
    t \> 0. 
\end{equation}
Here, $\cE(t,s)
  :=
  (\cE_{jk}(t,s))_{1\< j,k\< n}
$ is an $(n\x n)$-matrix of self-adjoint operators on the Fock space $\fF$, which is  defined for any $t\> s\> 0$ as the leftwards time-ordered operator exponential
\begin{equation}
\label{Ets}
  \cE(t,s)
  :=
    \lexp
  \int_s^t
  (A(\tau)\rd \tau + 2\Theta \diam (M^\rT \rd W(\tau))), 
\end{equation}
and hence, $[\cE_{\bullet k}(t,s), X(s)^\rT] = 0$ for all $k = 1, \ldots, n$.   
The columns $\cE_{\bullet k}(t,s):= (\cE_{jk}(t,s))_{1\< j \< n}$ of the matrix $\cE(t,s)$ satisfy the  common QSDE 
\begin{align}
\nonumber
    \rd_t \cE_{\bullet k}(t,s)
    & =
    A(t) \cE_{\bullet k}(t,s)\rd t + B(\cE_{\bullet k}(t,s))\rd W(t)\\
\label{dEts}
    & =
    (A(t)\rd t + 2\Theta \diam (M^\rT \rd W(t)))\cE_{\bullet k}(t,s)
\end{align}
(see \cite[Eq. (3.33)]{VP_2022_SIAM} or \cite[Eq. (28)]{VP_2024_EJC}), 
with the initial condition $\cE(s,s) = I_n \ox \cI$ which is identified with $I_n$.   By linearity of the map $B$ in (\ref{BX}), the QSDE (\ref{dEts}) is  a homogeneous version of (\ref{dX}), with $b$ removed, and the second equality in (\ref{dEts}) employs the identity (\ref{cdotdiam}) for the product (\ref{diam}).

In the case of vacuum input fields being considered in accordance with (\ref{rho}), the diffusion term $B(\cE_{\bullet k})\rd W$ is a martingale part  of the QSDE (\ref{dEts}). Hence, this term does not contribute to the time derivative of the expectation $\bE \zeta := \Tr (\rho \zeta)$ of a quantum variable $\zeta$ on the system-field space (\ref{fH}) when $\bE(\cdot)$ is evaluated at the entries of the matrix (\ref{Ets}), which leads to
\begin{equation}
\label{EEts}
    \d_t \bE \cE(t,s) 
    = 
    A(t)  \bE \cE(t,s),
    \qquad
    t \> s \> 0
\end{equation}
(cf. \cite[Eq. (4.20)]{VP_2022_SIAM}), with $\bE \cE(s,s) = I_n$. Therefore, the expectation in (\ref{EEts}) coincides with the fundamental matrix $G(t,s) \in \mR^{n\x n}$ of the ODE $\dot{z}(t) = A(t) z(t)$, where $\dot{(\ )}$ is the time derivative:
\begin{equation}
\label{EEtsG}
    \bE \cE(t,s) 
    = 
    G(t,s),
    \qquad
    t \> s \> 0. 
\end{equation}
The mean values of the system variables are obtained by taking the quantum expectation on both sides of (\ref{Xts}) and using (\ref{EEtsG}) as
\begin{equation}
\label{mu}
  \mu(t)
  :=
  \bE X(t) = G(t,0) \mu_0 + \psi(t) b, 
  \qquad
  \mu_0 := \mu(0). 
\end{equation}
Here, $\psi: \mR_+ \to \mR^{n\x n}$ is an auxiliary function  of time defined by 
\begin{equation}
\label{psi}
    \psi(t)
    := 
    \int_0^t 
    G(t,s)\rd s, 
    \qquad
    t \> 0, 
\end{equation}
and thus satisfying the ODE $\dot{\psi}(t)  = A(t) \psi(t) + I_n$ initialised at $\psi(0) = 0$.  
The solution of the QSDE (\ref{dX}) admits an alternative representation in terms of the fundamental matrix $G$ as 
\begin{equation}
\label{Xsol}
    X(t) = G(t,0)X_0 + \psi(t) b + Z(t). 
\end{equation}
Here, (\ref{psi}) is used along with a zero-mean quantum process 
\begin{equation}
\label{Z}
    Z(t)
    :=
    \int_0^t
    G(t,s)
    B(X(s))
    \rd W(s)
\end{equation}
which, while depending functionally  on $X_0$,  is uncorrelated with the initial system variables:
\begin{equation}
\label{EZX0}
    \bE Z(t) = 0,
    \qquad
  \bE(Z(t)X_0^\rT) = 0.
\end{equation}
The algebraic structure (\ref{Xalg}) of the system variables and the statistical properties of the process (\ref{Z}) lead to the following moment relations.

\begin{lem}
\label{lem:EXX}
At any time $t\> 0$, the quantum system variables in (\ref{dX}) satisfy 
\begin{align}
\label{EXXt}
  \bE (X(t) X(t)^\rT) 
  & =  
  \alpha + \beta \cdot \mu(t),\\
\label{EXXt0}
  \bE(X(t)X_0^\rT)
  & = 
    G(t,0) (\alpha + \beta \cdot \mu_0)  + \psi(t)b \mu_0^\rT,
\end{align}
where $\alpha$, $\beta$ are the structure constants from (\ref{Xalg}), the function $\psi$ is associated by (\ref{psi})  with the fundamental matrix $G$ of the ODE (\ref{EEts}), and $\mu(t)$ is the mean vector from (\ref{mu}). \hfill$\square$
\end{lem}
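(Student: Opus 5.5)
The first relation (\ref{EXXt}) follows directly from the algebraic structure (\ref{Xalg}). Since $\alpha$ and the sections $\beta_\ell$ are constant matrices, and $\bE$ acts entrywise and linearly, taking the quantum expectation of both sides of $X(t)X(t)^\rT = \alpha + \beta\cdot X(t)$ gives $\bE(X(t)X(t)^\rT) = \alpha + \sum_{\ell} \beta_\ell \bE X_\ell(t)$. The sum equals $\beta\cdot \mu(t)$ by the definition of $\mu$ in (\ref{mu}). We also use that $\bE \cI = 1$, because $\rho$ in (\ref{rho}) has unit trace. The only point to check is that (\ref{Xalg}) is preserved at every $t\> 0$, and this is stated in Section~\ref{sec:mem}.

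For (\ref{EXXt0}), I will use the representation (\ref{Xsol}) and right-multiply it by $X_0^\rT$:
$$
X(t)X_0^\rT = G(t,0)X_0X_0^\rT + \psi(t) b X_0^\rT + Z(t)X_0^\rT .
$$
The coefficients $G(t,0)$, $\psi(t)$ and $b$ are real deterministic matrices and vectors. They therefore factor out of the expectation, and the order of multiplication is preserved because they multiply $X_0$ and $Z$ only as scalars. This gives three terms:
\begin{itemize}
\item By (\ref{Xalg}) at $t=0$, $\bE(X_0X_0^\rT) = \alpha + \beta\cdot\mu_0$, which is the $t=0$ case of the first part.
\item $\bE(bX_0^\rT) = b\mu_0^\rT$.
\item $\bE(Z(t)X_0^\rT) = 0$, by the second equality in (\ref{EZX0}).
\end{itemize}
Assembling these yields $G(t,0)(\alpha+\beta\cdot\mu_0) + \psi(t)b\mu_0^\rT$.

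The main point requiring care is the uncorrelatedness (\ref{EZX0}), which the paper states before the lemma, so I may take it as given. If I wanted to justify it, I would argue that each increment $\rd W(s)$ in (\ref{Z}) is a forward increment in the vacuum state. It is adapted-future relative to $B(X(s))$ and to $X_0$, and $X_0$ commutes with the field and acts on $\fH_0$ only. The product structure $\rho = \rho_0\ox\ups$ then gives $\bE(B(X(s))\rd W(s) X_0^\rT) = 0$, since the vacuum expectation of $\rd W$ vanishes in the quantum Ito sense. A secondary subtlety is noncommutativity: $X_0X_0^\rT$ must be kept in that order, with $X_0$ on the right. This is automatic because $X(t)X_0^\rT$ is formed with the factor $X_0^\rT$ on the right in each term.
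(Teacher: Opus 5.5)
Your proposal is correct and follows the paper's proof essentially verbatim. The first relation comes from taking expectations in (\ref{Xalg}); the second comes from right-multiplying (\ref{Xsol}) by $X_0^\rT$, then using (\ref{EXXt}) at $t=0$ and the uncorrelatedness $\bE(Z(t)X_0^\rT)=0$ from (\ref{EZX0}), which the paper likewise takes as given rather than justifying as your extra sketch does.
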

\begin{proof}
The relation (\ref{EXXt}) is obtained by taking the quantum expectation on both sides of the first equality in (\ref{Xalg}). By combining (\ref{Xsol}) with the second equality from (\ref{EZX0}) and using (\ref{EXXt}), it follows that
\begin{align}
\nonumber
  \bE(X(t)X_0^\rT)
  & = 
  G(t,0) \bE (X_0 X_0^\rT)  + \psi(t)b \bE X_0^\rT + \bE(Z(t)X_0^\rT) \\
\label{EXXt0der}
    & = 
    G(t,0) (\alpha + \beta \cdot \mu_0)  + \psi(t)b \mu_0^\rT,
\end{align}
which establishes (\ref{EXXt0}). 
\end{proof}

Note that the two-point second-moment relation (\ref{EXXt0}),  which complements its one-point counterpart (\ref{EXXt}), is an adaptation of \cite[Eq. (4.28)]{VP_2022_SIAM} to the case of time-varying system dynamics.

\section{MEAN-SQUARE DEVIATION FROM INITIAL CONDITIONS}
\label{sec:dev}

We aim the memory control design to preserve the initial condition 
\begin{equation}
\label{varphi0}
  \varphi_0 := \varphi(0)
\end{equation}
of a quantum process
\begin{equation}
\label{FX}
    \varphi(t):= FX(t),
    \qquad
    t \> 0.  
\end{equation}
The latter consists of $\nu\< n$ independent linear combinations (for example, a subset) of the system variables $X_1, \ldots, X_n$. Their coefficients comprise the rows of a given full row rank matrix $F \in \mR^{\nu \x n}$. Accordingly, the control objective is to minimise the deviation 
\begin{equation}
\label{eta}
    \eta(t):= \varphi(t) - \varphi_0 = F \xi(t)  
\end{equation}
in some sense. 
It is related to a similar deviation process $\xi$ for the system variables in (\ref{Xsol}):
\begin{equation}
\label{xi}
  \xi(t)
   :=
  X(t)-X_0
  =
  D(t)X_0 + \psi(t)b + Z(t),  
\end{equation}
where 
\begin{equation}
\label{D}
  D(t):= G(t,0)-I_n. 
\end{equation}
From (\ref{mu}) and (\ref{D}), or alternatively, by the first equality in (\ref{EZX0}), it follows that the mean value of the deviation (\ref{xi}) takes the form 
\begin{equation}
\label{Exi}
  \bE \xi(t) = \mu(t)-\mu_0 = D(t)\mu_0 + \psi(t)b. 
\end{equation}
However, the minimisation of (\ref{eta}) is formulated using a mean-square functional \cite{VP_2024_ANZCC,VP_2024_EJC}
\begin{equation}
\label{Del}
    \Delta(t) := \bE (\eta(t)^\rT\eta(t)) 
    = \bra \Sigma, \Xi(t)\ket , 
\end{equation}
which involves a real positive semi-definite symmetric matrix
\begin{equation}
\label{FF}
    \Sigma := F^\rT F
\end{equation}
of order $n$ with $\rank \Sigma = \nu$. Here, $\bra\cdot, \cdot\ket $    is the Frobenius inner product of matrices \cite{HJ_2007},  and 
\begin{equation}
\label{Xi}
  \Xi(t):= \Re \Ups(t),
  \qquad
    \Ups(t)
    :=
    \bE (\xi(t)\xi(t)^\rT). 
\end{equation}
For what follows, we associate with $\beta$ from (\ref{Xalg}) a real array
\begin{equation}
\label{Rebet}
    \gamma
    := 
    (\gamma_{jk\ell})_{1\< j,k,\ell\< n} 
    := 
    \Re \beta  \in \mR^{n \x n \x n}. 
\end{equation}
Its sections $\gamma_\ell := (\gamma_{jk\ell})_{1\< j,k\< n} = \Re \beta_\ell$ are symmetric matrices since $\beta_\ell$ are Hermitian. In accordance with (\ref{EXXt}), the array $\gamma$ also plays a role in the second moments of the system variables, including
\begin{equation}
\label{ReEXX0}
    P:= \Re \bE(X_0 X_0^\rT) = \alpha + \gamma \cdot \mu_0. 
\end{equation}
This matrix is positive semi-definite and, moreover, $P \succcurlyeq \mu_0\mu_0^\rT$ since $P- \mu_0\mu_0^\rT = \Re \cov(X_0)$ is the real part  of the quantum covariance matrix 
\begin{align}
\nonumber
    \cov(X_0)
    & := 
    \bE ((X_0-\mu_0)(X_0-\mu_0)^\rT)\\
\label{covX0} 
    & = 
    \alpha + \beta \cdot \mu_0 - \mu_0 \mu_0^\rT\succcurlyeq 0,  
\end{align}
with $\Im \cov(X_0) = \frac{1}{2i} \bE [X_0, X_0^\rT] = \Theta \cdot \mu_0$ in view of (\ref{XXcomm}).  

\begin{lem}
\label{lem:ReExixi}
For any time $t\> 0$,  the matrix $\Xi(t)$ in (\ref{Xi}) can be computed as 
\begin{equation}
\label{Xi1} 
    \Xi(t)
    =  
    \gamma \cdot (\mu(t)-\mu_0)- Q(t) - Q(t)^\rT ,
\end{equation}
in terms of (\ref{Exi}) and  
\begin{equation}
\label{ReExiX}
    Q(t)
    := 
    \Re \bE(\xi(t)X_0^\rT) = D(t)P  + \psi(t)b \mu_0^\rT
\end{equation}
using $\gamma$, $P$ from (\ref{Rebet}), (\ref{ReEXX0}),  the functions $\psi$, $D$ from (\ref{psi}), (\ref{D}), and the vectors $b$, $\mu_0$ from (\ref{A_b}), (\ref{mu}).  
\hfill$\square$
\end{lem}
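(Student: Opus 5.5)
We expand the outer product of the deviation $\xi(t) = X(t) - X_0$ from (\ref{xi}) directly, rather than working with the representation via $Z(t)$:
\begin{equation*}
  \Ups(t)
  =
  \bE(X(t)X(t)^\rT) - \bE(X(t)X_0^\rT) - \bE(X_0X(t)^\rT) + \bE(X_0X_0^\rT).
\end{equation*}
We then take real parts term by term, using $\Re \beta = \gamma$ from (\ref{Rebet}). By (\ref{EXXt}) of Lemma~\ref{lem:EXX}, the first term has real part $\alpha + \gamma\cdot\mu(t)$. Applying (\ref{EXXt}) at $t=0$ shows that the last term has real part $\alpha + \gamma\cdot\mu_0 = P$, in agreement with (\ref{ReEXX0}).

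For the cross terms, note that $\bE(X_0X(t)^\rT) = (\bE(X(t)X_0^\rT))^\#$, where $(\cdot)^\#$ denotes entrywise complex conjugation; this holds because the entries are self-adjoint operators. Hence $\Re\bE(X_0X(t)^\rT) = (\Re \bE(X(t)X_0^\rT))^\rT$. Now write $X(t) = \xi(t) + X_0$, so that
\begin{equation*}
  \Re\bE(X(t)X_0^\rT) = Q(t) + P,
  \qquad
  Q(t) := \Re \bE(\xi(t)X_0^\rT).
\end{equation*}
Combining the four contributions gives
\begin{equation*}
  \Xi(t)
  =
  \alpha + \gamma\cdot\mu(t) - (Q+P) - (Q+P)^\rT + P.
\end{equation*}
Since $P$ is symmetric and equals $\alpha + \gamma\cdot\mu_0$, the terms $\alpha - 2P + P$ reduce to $-\gamma\cdot\mu_0$. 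This yields (\ref{Xi1}), with $\gamma\cdot(\mu(t)-\mu_0)$ appearing because $\gamma\cdot u$ is linear in $u$.

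It remains to establish the explicit formula for $Q(t)$ in (\ref{ReExiX}). Subtracting $\bE(X_0X_0^\rT) = \alpha + \beta\cdot\mu_0$ from (\ref{EXXt0}) and using $D(t) = G(t,0) - I_n$ from (\ref{D}), we obtain
\begin{equation*}
  \bE(\xi(t)X_0^\rT) = D(t)(\alpha + \beta\cdot\mu_0) + \psi(t)b\mu_0^\rT.
\end{equation*}
The matrices $D$ and $\psi$ are real, as are $b$ and $\mu_0$, since $\mu_0$ is the mean of self-adjoint operators. Taking real parts therefore gives $D(t)P + \psi(t)b\mu_0^\rT$.

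The only delicate point is the real-part bookkeeping for the cross term $\bE(X_0X(t)^\rT)$, that is, justifying the transpose–conjugate relation between the two cross terms. This follows from $\bE(\zeta^\dagger) = \overline{\bE\zeta}$ applied entrywise, since $(X_j(t)X_{0k})^\dagger = X_{0k}X_j(t)$. Everything else is direct substitution of Lemma~\ref{lem:EXX}.
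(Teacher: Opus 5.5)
Your argument is correct and is essentially the paper's proof. Expanding $\Ups(t)$ into the four second moments of $X(t)$ and $X_0$ is just the expectation of the paper's operator identity $\xi\xi^\rT = \beta\cdot\xi - \xi X_0^\rT - X_0\xi^\rT$, and deriving $Q(t)$ from (\ref{EXXt0}) and (\ref{D}) is the alternative route the paper itself mentions.

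One small slip: the cross terms satisfy $\bE(X_0X(t)^\rT) = (\bE(X(t)X_0^\rT))^*$, i.e.\ conjugate transpose, not entrywise conjugation alone. Your entrywise justification actually shows this, and the real-part relation with the transpose that you then use is correct.
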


\begin{proof}
From the definition of $\xi$ in (\ref{xi}), it follows that $X(t) = X_0 + \xi(t)$,  and hence, 
\begin{align}
\nonumber
    \xi(t) \xi(t)^\rT 
    & = 
    X(t)X(t)^\rT - X_0 X_0^\rT - \xi(t) X_0^\rT - X_0 \xi(t)^\rT  \\
\label{xixi}
    & = \beta \cdot \xi(t) - \xi(t) X_0^\rT - X_0 \xi(t)^\rT, 
\end{align}
where the last equality uses the algebraic structure (\ref{Xalg}) of the system variables. Application of the quantum expectation to both sides of (\ref{xixi}) allows the second-moment matrix of $\xi(t)$ in (\ref{Xi}) to be computed as
\begin{align}
\nonumber
    \Ups(t)
    & = \beta \cdot \bE \xi(t) - \bE(\xi(t) X_0^\rT + X_0 \xi(t)^\rT)\\
\label{Ups}
    & = 
    \beta \cdot (\mu(t)-\mu_0) - 2\bH(\bE(\xi(t) X_0^\rT)).  
\end{align}
Here, use is also made of (\ref{Exi}), the Hermitian part  $\bH(z):= \frac{1}{2} (z+z^*)$ of a complex square matrix $z$ and the fact that $\bE (uv^\rT) = (\bE (vu^\rT))^*$ for any vectors $u$, $v$ consisting of self-adjoint operators. By a combination of (\ref{xi})  with the second equality in (\ref{EZX0}), 
\begin{align}
\nonumber
    \bE(\xi(t)X_0^\rT) 
    & = D(t) \bE (X_0X_0^\rT) + \psi(t)b \bE X_0^\rT + \bE (Z(t)X_0^\rT) \\
\label{ExiX0}
    &= D(t)\bE (X_0X_0^\rT)  + \psi(t)b \mu_0^\rT, 
\end{align}
which can also be obtained from (\ref{EXXt0der}), (\ref{D}).  In view of (\ref{ReEXX0}), the real part of (\ref{ExiX0}) yields the matrix $Q(t)$ in (\ref{ReExiX}). By substituting  (\ref{ExiX0}) into (\ref{Ups}) and using the identity $\Re \bH(z) = \bS(\Re z)$ for any complex square matrix $z$ along with the symmetrization $\bS(w):= \frac{1}{2}(w+w^\rT)$ for real square matrices $w$, it follows that $\Xi(t)$ in (\ref{Xi}) takes the form
\begin{align*}
    \Xi(t)
    & = 
    \gamma \cdot (\mu(t)-\mu_0) - 2\Re \bH(\bE(\xi(t) X_0^\rT))\\  
    & =  
    \gamma \cdot (\mu(t)-\mu_0) - 2\bS(Q(t)), 
\end{align*}
with $\gamma$, $Q(t)$ given by (\ref{Rebet}), (\ref{ReExiX}), thus establishing the relation (\ref{Xi1}). 
\end{proof}

Note that in view of (\ref{xi}), the matrices (\ref{ReEXX0}), (\ref{ReExiX}) are related to the matrix (\ref{EXXt0}) as
\begin{equation}
\label{QPsum}
    P + Q(t) = \Re \bE ((X_0+\xi(t))X_0^\rT)=\Re \bE (X(t)X_0^\rT). 
\end{equation}

\section{AUXILIARY CONTROL-AFFINE DYNAMICAL SYSTEM}
\label{sec:aux}

For what follows, we associate with the mean vector (\ref{mu}) and the matrices (\ref{ReEXX0}), (\ref{ReExiX}) 
a real time-varying $(n\x (n+1))$-matrix 
\begin{equation}
\label{muQP}
    z(t) := 
    \begin{bmatrix}
      \mu(t) & P + Q(t)
    \end{bmatrix},
    \qquad
    t \> 0. 
\end{equation}
The quantum probabilistic meaning of its rightmost $(n\x n)$-block is provided by (\ref{QPsum}). Furthermore, since $Q_0 := Q(0) = 0$ in (\ref{ReExiX}), then 
\begin{equation}
\label{z0}
    z_0
    := 
    z(0) 
    = 
    \begin{bmatrix}
      \mu_0 & P
    \end{bmatrix}. 
\end{equation}
The matrix $z_0$ cannot be arbitrary and is contained by a bounded set
\begin{equation}
\label{cZ}
  \cZ:= 
  \{
  \begin{bmatrix}
    v & \alpha + \gamma \cdot v
  \end{bmatrix}:\ 
  v \in \mR^n,\ 
  \alpha + \beta \cdot v \succcurlyeq vv^\rT
  \}
\end{equation}
in view of (\ref{ReEXX0}), (\ref{covX0}). In turn,  the set $\cZ$ is contained by the intersection of an ellipsoid with an $n$-dimensional affine subspace of $\mR^{n \x (n+1)}$ (see also \cite[Eq. (2.22)]{VP_2022_SIAM}). 
The significance of (\ref{muQP}) for the underlying quantum memory system is clarified by Theorem~\ref{th:Delz} below. Its formulation employs  the following constant real matrix, vector and scalar:  
\begin{align}
\label{R}
    R & := 
    \begin{bmatrix}
      \sigma & -2\Sigma
    \end{bmatrix}, \\
\label{sig}
    \sigma 
    & := 
    (\bra\Sigma, \gamma_\ell\ket)_{1\< \ell \< n} 
    =
    \Sigma \star \gamma,\\
\label{d}
    d 
    & := 
    2\bra \Sigma, P\ket-\sigma^\rT \mu_0 = -\bra R, z_0\ket,  
\end{align}
which are computed in terms of the matrices $\Sigma$, $P$ from (\ref{FF}), (\ref{ReEXX0}), the sections $\gamma_1, \ldots, \gamma_n$ of the array (\ref{Rebet}), the vector $\mu_0$ in (\ref{mu}), and related to the matrix $z_0$ from  (\ref{z0}). Here, for any matrix $u \in \mR^{p\x q}$ and any array $v \in \mR^{p\x q\x s}$ with sections $v_1, \ldots, v_s \in \mR^{p\x q}$, we use a product $u\star v := (\bra u,v_k\ket )_{1\< k  \< s} \in \mR^s$ satisfying the identity 
\begin{equation}
\label{ustarv}
    \bra u, v \cdot w \ket  
    =
    \sum_{k=1}^s
    \bra u,v_k\ket
    w_k
    =
    (u\star v)^\rT w
\end{equation}
for any vector $w \in \mR^s$.

\begin{thm}
\label{th:Delz}
For any time $t\> 0$, the mean-square deviation (\ref{Del}) is an affine function 
\begin{equation}
\label{DelRzd0}
  \Delta(t)
    =
    \bra R, z(t)\ket + d   
    =
    \bra R, z(t)-z_0\ket
\end{equation}
(with the constant coefficients in (\ref{R})--(\ref{d})) of the matrix (\ref{muQP}). The latter  satisfies a nonhomogeneous linear ODE
\begin{equation}
\label{zdot}
    \dot{z}(t) = A(t) z(t) + c,
    \qquad
    c := 
    b\begin{bmatrix}
      1 & \mu_0^\rT
    \end{bmatrix}
\end{equation}
with the time-varying matrix $A$ from  (\ref{A_b}),  the constant forcing term $c \in \mR^{n \x (n+1)}$ and the initial condition (\ref{z0}). \hfill$\square$
\end{thm}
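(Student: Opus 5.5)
The plan splits into two independent parts: the affine representation of $\Delta$ and the ODE for $z$.

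\emph{Affine representation.} Start from (\ref{Del}), $\Delta(t)=\bra\Sigma,\Xi(t)\ket$, and substitute Lemma~\ref{lem:ReExixi}. By (\ref{ustarv}) and (\ref{sig}),
\[
\bra \Sigma,\gamma\cdot(\mu(t)-\mu_0)\ket=\sigma^\rT(\mu(t)-\mu_0).
\]
Since $\Sigma$ is symmetric, $\bra\Sigma,Q+Q^\rT\ket=2\bra\Sigma,Q\ket$. Hence
\[
\Delta(t)=\sigma^\rT(\mu(t)-\mu_0)-2\bra\Sigma,Q(t)\ket .
\]
Using $Q(t)=(P+Q(t))-P$ and the block structure of $R$ in (\ref{R}) and $z$ in (\ref{muQP}), this becomes
\[
\Delta(t)=\bra R,z(t)\ket-\sigma^\rT\mu_0+2\bra\Sigma,P\ket=\bra R,z(t)\ket+d .
\]
The identity $d=-\bra R,z_0\ket$ then follows directly from (\ref{z0}), which gives the second form in (\ref{DelRzd0}).

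\emph{Differential equation.} Treat the two blocks of $z$ separately.
\begin{itemize}
\item First column. By (\ref{mu}), $\mu(t)=G(t,0)\mu_0+\psi(t)b$. Differentiating with $\d_t G(t,0)=A(t)G(t,0)$ and $\dot\psi=A\psi+I_n$ gives $\dot\mu=A\mu+b$. (Equivalently, take the expectation in (\ref{dX}); the martingale term vanishes.)
\item Remaining block. By (\ref{ReExiX}) and (\ref{D}), $P+Q(t)=G(t,0)P+\psi(t)b\mu_0^\rT$. Differentiating gives
\[
\frac{\rd}{\rd t}(P+Q)=A\bigl(G(t,0)P+\psi b\mu_0^\rT\bigr)+b\mu_0^\rT=A(P+Q)+b\mu_0^\rT .
\]
\end{itemize}
Stacking the two blocks yields $\dot z=Az+b\begin{bmatrix}1&\mu_0^\rT\end{bmatrix}=Az+c$. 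The initial condition is (\ref{z0}), which holds because $G(0,0)=I_n$ and $\psi(0)=0$.

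\emph{Expected difficulties.} There is no real obstacle. The only points needing care are bookkeeping ones: the Frobenius pairing with the block matrix $R$, the symmetry of $\Sigma$ used to merge $Q$ and $Q^\rT$, and the fact that $P$ is constant. The last point matters because the ODE must be written for $P+Q$ rather than for $Q$; otherwise a spurious $AP$ forcing term would appear.
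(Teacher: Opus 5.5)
Your proposal is correct and follows essentially the same route as the paper: substitute Lemma~\ref{lem:ReExixi} into (\ref{Del}), use the $\star$-identity (\ref{ustarv}) and the symmetry of $\Sigma$ to reach $\bra R, z(t)\ket + d$, and then differentiate $\mu$ and $P+Q(t) = G(t,0)P + \psi(t) b\mu_0^\rT$ using the ODEs for $G$ and $\psi$ before stacking the blocks. The only difference is that you differentiate $P+Q$ directly while the paper differentiates $Q$, which changes nothing since $P$ is constant.
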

\begin{proof}
The representation (\ref{DelRzd0}) is obtained by substituting (\ref{Xi1}) into the right-hand side of (\ref{Del}) and using (\ref{muQP})--(\ref{ustarv}):
\begin{align*}
  \Delta(t)
  & = 
  \bra
    \Sigma,
    \gamma \cdot (\mu(t)-\mu_0)- Q(t) - Q(t)^\rT
  \ket\\
    & = 
    (\Sigma \star \gamma)^\rT 
    (\mu(t)-\mu_0) - 2\bra \Sigma, Q(t)\ket \\
    & = 
    \sigma^\rT \mu(t)    
    -
    2\bra \Sigma, P+Q(t)\ket + d
    =
    \bra R, z(t)\ket + d, 
\end{align*}
where $\bra \Sigma, Q^\rT\ket = \bra \Sigma, Q\ket$ due to  the symmetry of the matrix $\Sigma$ in  (\ref{FF}). The second equality in (\ref{DelRzd0}) follows from the last equality in (\ref{d}) and corresponds to the zero initial deviation in (\ref{Del}):
\begin{equation}
\label{Del0}
    \Delta_0 := \Delta(0) = 0. 
\end{equation}
Now, the time-varying matrix (\ref{ReExiX}) satisfies a nonhomogeneous linear ODE 
\begin{align}
\nonumber   
    \dot{Q}(t)
    & = 
    \dot{D}(t) P +\dot{\psi}(t) b \mu_0^\rT\\
\nonumber
     & = \dot{G}(t,0) P +(A(t)\psi(t) + I_n)b \mu_0^\rT\\
\nonumber   
    & = A(t)(I_n+D(t))P +(A(t)\psi(t) + I_n)b \mu_0^\rT\\
\label{Qdot}
    & =
    A(t)(P + Q(t)) + b \mu_0^\rT, 
    \qquad t \> 0, 
\end{align}
where (\ref{D}) is 
used along with the properties of the fundamental matrix $G$ and the related function $\psi$ from  (\ref{psi}). The mean vector (\ref{mu}) satisfies the ODE $\dot{\mu}(t) = A(t)\mu(t) + b$ with the same matrix $A(t)$ as the columns of the matrix $P+ Q(t)$ in (\ref{Qdot}). Hence, these two ODEs can be assembled into one ODE (\ref{zdot}) for the matrix (\ref{muQP}) with an appropriately augmented forcing term $\begin{bmatrix}
  b & b\mu_0^\rT
\end{bmatrix} = c$.  
\end{proof}

Since Theorem~\ref{th:Delz} relates the mean-square deviation functional $\Delta$ for the quantum memory  system to the $\mR^{n\x (n+1)}$-valued state $z$ of the auxiliary classical deterministic dynamical system (\ref{zdot}), any performance analysis or optimisation problem for $\Delta$ can be reformulated in terms of the classical system. Furthermore, due to the affine dependence of the matrix 
\begin{equation}
\label{AAA}
    A(t) = A_* + \cA \cdot U(t)
\end{equation}  
in (\ref{A_b}),  (\ref{A*})--(\ref{Ak})  on the control signal $U$,   the ODE (\ref{zdot}) 
is organised as a control-affine system \cite{S_1998}.  The affinity of the dependence on the control is inherited by the time derivative 
\begin{align}
\nonumber
    \dot{\Delta}(t)
    & = 
    \bra R, \dot{z}(t)\ket
    = 
    \bra R, A(t)z(t) + c\ket\\
\nonumber
    & =    
    \bra R, A_*z(t) + c\ket
    +
    \bra R, (\cA\cdot U(t))z(t)\ket\\
\nonumber
    & = 
    f(z(t))
    +
    \bra Rz(t)^\rT, \cA \cdot U(t)\ket\\    
\label{Deldot}
    & = 
    f(z(t))
    +
    g(z(t))^\rT U(t)
    =: h(z(t), U(t)),  
\end{align}
obtained from (\ref{DelRzd0}) by using (\ref{zdot}) along with (\ref{AAA}) and (\ref{R})--(\ref{ustarv}). Here, $f$, $g$ are affine and linear functions on $\mR^{n \x (n+1)}$ with values in $\mR$ and $\mR^r$, respectively, which are defined by 
\begin{equation}
\label{fg}
        f(v)
        := \bra R, A_*v + c\ket,
        \qquad
        g(v)
        := 
        (Rv^\rT)\star \cA,
        \qquad
        v \in \mR^{n\x (n+1)}. 
\end{equation}
The function $h: \mR^{n \x (n+1)}\x \mR^r \to \mR$ in (\ref{Deldot}) depends affinely on its second argument,  so that $g(v)= \d_u h(v,u)$ is the  gradient of $h(v,u)$ with respect to  the control variable $u \in \mR^r$.   Since $\Delta(t)\> 0$ for any $t\> 0$, then (\ref{Del0}) implies that $\dot{\Delta}(0)\> 0$ for any initial control $U_0 := U(0) \in \mR^r$. Therefore, if the vector $g(z_0)$ in (\ref{Deldot}), considered at $t=0$,  were nonzero, then there would exist $U_0$ such that $\dot{\Delta}(0) = f(z_0) + g(z_0)^\rT U_0 < 0$, thus contradicting the property that $\dot{\Delta}(0)\> 0$ for any $U_0\in \mR^r$. From this contradiction, it follows that $g(z_0) = 0$, and hence, 
\begin{equation}
\label{Deldot0}
  \dot{\Delta}(0) = f(z_0) \> 0. 
\end{equation}
In particular, $\dot{\Delta}(0)$ is independent of $U_0$. However, the second time derivative of $\Delta$  acquires dependence on the control signal $U$. More precisely, by assuming that $U$ is  continuously differentiable, the differentiation of (\ref{Deldot}) with respect to time yields
\begin{align}
\nonumber
    \ddot{\Delta}(t)
    = &  
    \bra
        R, A_* \dot{z}(t)
    \ket
    +
    g(\dot{z}(t))^\rT U(t) + g(z(t))\dot{U}(t)\\
\nonumber
    = &  
    \bra
        R, A_* (A_* z(t)+c)
    \ket
    +
    \bra
        R, A_* \wt{A}(t) z(t)
    \ket    \\
\nonumber
    & + 
    g(A_*z(t)+c)^\rT U(t) + g(\wt{A}(t)z(t))^\rT U(t)\\
\nonumber
     & + g(z(t))\dot{U}(t)    \\
\nonumber
    = &  
    \bra
        R, A_* (A_* z(t)+c)
    \ket
    +
    \bra
        A_*^\rT R z(t)^\rT, \cA \cdot U(t)
    \ket    \\
\nonumber
    & + 
    g(A_*z(t)+c)^\rT U(t) + g(\wt{A}(t)z(t))^\rT U(t)\\
\nonumber
     & + g(z(t))\dot{U}(t)    \\
\nonumber
    = &  
    \bra
        R, A_* (A_* z(t)+c)
    \ket\\
\nonumber
    & +
    ((A_*^\rT R z(t)^\rT + R(A_* z(t)+c)^\rT)\star \cA)^\rT U(t)\\
\nonumber
    & + 
    ((Rz(t)^\rT (\cA \cdot U(t))^\rT)\star \cA)^\rT U(t)\\
\label{Delddot}
     & + g(z(t))\dot{U}(t)    , 
\end{align} 
which is quadratic in $U(t)$ and affine in $\dot{U}(t)$. In particular, (\ref{Delddot}) shows that $\ddot{\Delta}(0)$ 
does not depend on $\dot{U}(0)$ as $g(z_0) = 0$.

\section{POINTWISE CONTROL OPTIMISATION}
\label{sec:lyap}

Recall that at any time $t>0$,  the quantity $\Delta(t)$ in (\ref{Del}) describes the mean-square  accuracy with which the quantum variables, comprising the process (\ref{FX}), approximately reproduce their initial conditions in (\ref{varphi0}). Its small values are beneficial for the quantum memory performance, which, due to the representation (\ref{DelRzd0}),  can be formulated  in terms of the auxiliary classical dynamical system (\ref{zdot}). Similarly to the gradient descent and speed gradient approaches to stabilisation of  control-affine systems (see, for example, \cite[Section~5.9]{S_1998} and \cite{AF_2021}), the control  $U$ can be used in order to steer the system (\ref{zdot}) towards small values of $\Delta$ by minimising $\dot{\Delta}(t)$ over $U(t)$ at every moment of time $t\>0$. However, in view of the affine dependence of $\dot{\Delta}(t)$ in (\ref{Deldot}) on $U(t)$, this minimisation, without constraints on $U$,  is not well-posed whenever $g(z(t))\ne 0$. Indeed, in this case, $\dot{\Delta}(t)$ can be assigned arbitrarily large negative values by an appropriate choice of sufficiently large controls $U(t) \in \mR^r$ forming an obtuse angle with the gradient vector $g(z(t))$.  Nevertheless, this minimisation problem can be regularised, and the control signal moderated,  by using an additional quadratic penalty $\|U\|_\Pi^2 := U^\rT \Pi U$, where $0\prec \Pi = \Pi^\rT \in \mR^{r\x r}$ is a given matrix. The resulting pointwise optimisation problem takes the form
\begin{equation}
\label{lyap} 
    \dot{\Delta}(t)+ \frac{1}{2}\|U(t)\|_\Pi^2 
    \longrightarrow \inf
\end{equation}
of a quadratic minimisation problem over     $U(t)\in \mR^r$ (with the $\frac{1}{2}$-factor incorporated for convenience) and is solved below. 
\begin{thm}
\label{th:opt1}
At any time $t\> 0$,  the problem (\ref{lyap}) has a unique solution
\begin{equation}
\label{Uhat}
    \wh{U}(t) = -\Pi^{-1} g(z(t)) ,  
\end{equation}
which is computed together with  
the corresponding value  
\begin{equation}
\label{Deldothat}
    \dot{\Delta}(t)  = f(z(t)) - \|g(z(t))\|_{\Pi^{-1}}^2    
\end{equation}
in terms of the functions $f$, $g$ from  (\ref{fg}). 
\hfill$\square$
\end{thm}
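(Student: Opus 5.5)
The plan is to treat (\ref{lyap}) at a fixed time $t\> 0$ as a finite-dimensional quadratic minimisation over $u := U(t)\in\mR^r$. The state $z(t)$ is regarded as given: at that instant it does not depend on the current value $U(t)$, since $z$ solves the ODE (\ref{zdot}) and is therefore determined by the past of the control. By the representation (\ref{Deldot}), which follows from Theorem~\ref{th:Delz}, the objective becomes
\begin{equation*}
    \phi(u) := h(z(t),u) + \frac{1}{2}u^\rT \Pi u = f(z(t)) + g(z(t))^\rT u + \frac{1}{2} u^\rT \Pi u .
\end{equation*}
This is a quadratic function of $u$ with Hessian $\Pi \succ 0$, so it is strictly convex and coercive. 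Hence it has a unique global minimiser, characterised by the first-order condition $\d_u \phi(u) = g(z(t)) + \Pi u = 0$. Since $\Pi$ is invertible, this gives $u = -\Pi^{-1}g(z(t))$, which is (\ref{Uhat}).

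To make the uniqueness self-contained without appealing to general convexity arguments, I would complete the square. With $v := g(z(t))$,
\begin{equation*}
    \phi(u) = f(z(t)) - \frac{1}{2}\|v\|_{\Pi^{-1}}^2 + \frac{1}{2}\|u + \Pi^{-1}v\|_\Pi^2 .
\end{equation*}
The last term is nonnegative and vanishes only at $u = -\Pi^{-1}v$, because $\Pi\succ 0$.

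Substituting $\wh{U}(t)$ into (\ref{Deldot}) then gives
\begin{equation*}
    \dot{\Delta}(t) = f(z(t)) + g(z(t))^\rT \wh{U}(t) = f(z(t)) - g(z(t))^\rT \Pi^{-1} g(z(t)) = f(z(t)) - \|g(z(t))\|_{\Pi^{-1}}^2 ,
\end{equation*}
which is (\ref{Deldothat}). For comparison, the minimum value of the full regularised functional is $f(z(t)) - \frac{1}{2}\|g(z(t))\|_{\Pi^{-1}}^2$, and it differs from $\dot{\Delta}(t)$ by the penalty term $\frac{1}{2}\|\wh{U}(t)\|_\Pi^2 = \frac{1}{2}\|g(z(t))\|_{\Pi^{-1}}^2$.

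The only point needing care, rather than a real obstacle, is interpretation: the statement reports $\dot{\Delta}(t)$ itself, not the minimum of the penalised objective. Apart from that, the argument is elementary once the control-affine structure (\ref{Deldot}) is in place.
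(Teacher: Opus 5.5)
Your proposal is correct and takes essentially the same route as the paper: you complete the square in $g^\rT u + \frac{1}{2}\|u\|_\Pi^2$ to get the unique minimiser $u = -\Pi^{-1}g(z(t))$, then substitute it into (\ref{Deldot}) to obtain (\ref{Deldothat}). Your extra remarks on strict convexity and on distinguishing $\dot{\Delta}(t)$ from the minimum of the penalised objective are accurate but add nothing essential beyond the paper's argument.
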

\begin{proof}
The solution (\ref{Uhat}) follows from (\ref{Deldot}) and the completion of the square in $g^\rT u + \frac{1}{2}\|u\|_\Pi^2 = \frac{1}{2} \|u + \Pi^{-1}g\|_\Pi^2- \frac{1}{2}\|g\|_{\Pi^{-1}}^2$ which achieves its minimum value $- \frac{1}{2}\|g\|_{\Pi^{-1}}^2$ only at $u =  -\Pi^{-1}g$. Accordingly, (\ref{Deldothat}) is obtained by substituting (\ref{Uhat}) into (\ref{Deldot}). 
\end{proof}

The control $\wh{U}(t)$ in  (\ref{Uhat}) is linear with respect to  the matrix $z(t)$  from (\ref{muQP}). Therefore, the corresponding matrix $A(t) = A_* + \cA \cdot \wh{U}(t)$ in (\ref{AAA}) depends affinely on $z(t)$, which gives rise to a quadratic nonlinearity in the ODE (\ref{zdot}): 
\begin{equation}
\label{zdot1}
    \dot{z}(t) = (A_* - \cA \cdot (\Pi^{-1} g(z(t))) )z(t) + c. 
\end{equation}
In view of (\ref{Del0}), integration of (\ref{Deldothat}) along the trajectory of (\ref{zdot1}) yields the mean-square deviation 
\begin{equation*}
\label{Delhat}
    \Delta(\tau)
    = 
    \int_0^\tau
    (f(z(t)) - \|g(z(t))\|_{\Pi^{-1}}^2)
    \rd t
\end{equation*}
at any time horizon $\tau>0$. The quadratic penalty on the control (\ref{Uhat}) in (\ref{lyap}) is taken into account in a similar fashion:
\begin{equation}
\label{Delhat1}
    \Delta(\tau)
    +
    \frac{1}{2}
    \int_0^\tau
    \|\wh{U}(t)\|_\Pi^2
    \rd t
    = 
    \int_0^\tau
    \Big(
        f(z(t)) - \frac{1}{2}\|g(z(t))\|_{\Pi^{-1}}^2
    \Big)
    \rd t. 
\end{equation}
Note that the nonlinearity of the ODE (\ref{zdot1}) makes the role of the matrix $\Pi$ for the memory performance nontrivial. Indeed, on the one hand, with $g(z(t))\ne 0$,   sufficiently small matrices $\Pi$  lead to $\dot{\Delta}(t)<0$ in (\ref{Deldothat}) on an appropriate set of moments of time $t>0$ (cf. (\ref{Deldot0})). For example, this holds whenever 
$$
    0 \prec 
    \Pi \prec  \frac{|g(z(t))|^2}{\max(0, f(z(t)))} I_r. 
$$
On the other hand, for small matrices $\Pi$, the auxiliary system dynamics (\ref{zdot1}) are substantially nonlinear, which complicates the relationship between negativity of $\dot{\Delta}$ on a subset of the time interval $[0,\tau]$ and the smallness of $\Delta(\tau)$.  

\section{FINITE-HORIZON OPTIMAL MEMORY CONTROL}
\label{sec:opt}

In comparison with the pointwise optimal control (\ref{lyap}), a related yet different approach to the quantum memory optimisation is provided by the minimisation of an integral cost functional
\begin{equation}
\label{Phi}
    \Phi(\tau)
    :=
  \Delta(\tau) + \frac{1}{2}\int_0^\tau \|U(t)\|_\Pi^2 \rd t
  = \int_0^\tau 
  \Big(
  \dot{\Delta}(t) + \frac{1}{2}\|U(t)\|_\Pi^2\Big) \rd t
  \longrightarrow \inf
\end{equation}
over continuous control signals $U \in C([0,\tau], \mR^r)$ with a given time horizon $\tau>0$. Here, as in (\ref{lyap}), the function $\Delta$ is associated by (\ref{DelRzd0}) with the $\mR^{n\x (n+1)}$-valued  state (\ref{muQP}) of the auxiliary dynamical system (\ref{zdot}). Application of dynamic programming to the problem (\ref{Phi}) uses the 
Bellman function 
\begin{equation}
\label{Psi}
    \Psi(t, v)
    := 
    \inf_{U\in C([t,\tau], \mR^r)}
    \Big(
    \Delta(\tau) 
    + 
    \frac{1}{2}\int_t^\tau \|U(s)\|_\Pi^2 \rd s
    \Big)
\end{equation}
for any $0 \< t\< \tau$ and $v \in \mR^{n\x (n+1)}$ (with the auxiliary system (\ref{zdot}) on $[t,\tau]$  initialised  at $z(t):= v$) and is described below. 

\begin{thm}
\label{th:opt2}
Suppose the Bellman function (\ref{Psi}) is continuously differentiable (including the continuous differentiablity of $\Psi$ with respect to its second argument in the sense of Frechet). Then the HJBE for it takes the form of the PDE 
\begin{equation}
\label{HJBE0}
    \d_t \Psi
    + 
        \bra
        \d_v \Psi,
        A_* v + c
        \ket
        -
        \frac{1}{2}
        \|(\d_v \Psi v^\rT)\star \cA \|_{\Pi^{-1}}^2 = 0  
\end{equation}
for all $0 \< t\< \tau$,  
with the terminal condition 
\begin{equation}
\label{Psitau}
    \Psi(\tau, v) = \bra R, v\ket + d,
    \qquad
    v \in \mR^{n\x (n+1)}.  
\end{equation}
The optimal control law, as a time-dependent function of the current state of the auxiliary system (\ref{zdot}),   is described by 
\begin{equation}
\label{uhat}
    \wh{u}(t,v) = - \Pi^{-1}((\d_v \Psi(t,v) v^\rT)\star \cA) 
\end{equation}
in terms of the $\mR^{n \x (n+1)}$-valued Frechet derivative $\d_v \Psi$.  
\hfill$\square$
\end{thm}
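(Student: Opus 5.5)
We follow the standard dynamic programming route for the control-affine system (\ref{zdot}) with running cost $\frac{1}{2}\|U\|_\Pi^2$ and terminal cost $\Delta(\tau)=\bra R, z(\tau)\ket + d$ given by Theorem~\ref{th:Delz}.

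The terminal condition (\ref{Psitau}) is immediate from (\ref{Psi}) at $t=\tau$: the integral vanishes and $z(\tau)=v$, so by (\ref{DelRzd0}) $\Psi(\tau,v) = \bra R,v\ket + d$.

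For the PDE, we use Bellman's principle of optimality on a short interval $[t,t+\eps]$:
\[
    \Psi(t,v) = \inf_{U} \Big(\frac{1}{2}\int_t^{t+\eps}\|U(s)\|_\Pi^2\rd s + \Psi(t+\eps, z(t+\eps))\Big),
\]
where $z$ solves (\ref{zdot}) from $z(t)=v$. This follows by splitting the control on $[t,\tau]$ into its restrictions to $[t,t+\eps]$ and $[t+\eps,\tau]$ and using the time-consistency of the terminal-integral cost. Since $\Psi$ is assumed $C^1$, a first-order expansion along (\ref{zdot}) with (\ref{AAA}) gives
\[
    \Psi(t+\eps,z(t+\eps)) = \Psi(t,v) + \eps\big(\d_t\Psi + \bra \d_v\Psi, (A_*+\cA\cdot u)v + c\ket\big) + o(\eps).
\]
Dividing by $\eps$ and letting $\eps\to 0$ then yields
\[
    \d_t\Psi + \inf_{u\in\mR^r}\Big(\bra \d_v \Psi, A_*v+c\ket + \bra \d_v\Psi v^\rT, \cA\cdot u\ket + \frac{1}{2}\|u\|_\Pi^2\Big)=0 .
\]
This limit needs justification in two directions. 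For the upper bound (``$\<$''), fix an arbitrary constant $u$ on the short interval. For the lower bound (``$\>$''), take $\eps$-optimal continuous controls and use continuity of $\d_t\Psi,\d_v\Psi$ together with uniform continuity of the trajectories. For the lower bound we also need a priori boundedness of near-optimal controls on $[t,t+\eps]$, which follows from the coercivity of $\frac{1}{2}\|u\|_\Pi^2$ compared with the linear growth in $u$.

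The middle term is rewritten via (\ref{ustarv}) as $((\d_v\Psi v^\rT)\star\cA)^\rT u$. This puts the Hamiltonian in exactly the form handled in Theorem~\ref{th:opt1}, with $g$ replaced by $\wt g := (\d_v\Psi v^\rT)\star\cA$. Completing the square as in the proof of that theorem gives the unique minimiser $u=-\Pi^{-1}\wt g$, which is (\ref{uhat}), and the minimum value $-\frac{1}{2}\|\wt g\|_{\Pi^{-1}}^2$. Substituting this value gives (\ref{HJBE0}).

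The main obstacle is the rigorous passage $\eps\to 0$ in the ``$\>$'' direction, i.e.\ controlling nearly optimal controls, which are not a priori bounded over the class $C([t,\tau],\mR^r)$. We would handle it by noting that any control with $\int_t^{t+\eps}\|U\|_\Pi^2\rd s$ much larger than $O(\eps)$ is already suboptimal. The effect of the remaining controls on $z(t+\eps)-v$ is then $O(\sqrt{\eps}\cdot\sqrt{\eps})$ by Cauchy--Schwarz, because the dynamics are affine in $U$, and this suffices for the first-order expansion. Finally, a verification argument shows that the feedback (\ref{uhat}) is optimal: substitute it into (\ref{zdot}), differentiate $\Psi(s,z(s))$ along the closed-loop trajectory, and integrate from $t$ to $\tau$ using (\ref{Psitau}). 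This shows the cost of (\ref{uhat}) equals $\Psi(t,v)$, while any other control gives a cost at least as large by the pointwise inequality in the HJBE.
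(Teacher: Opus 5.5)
Your proposal is correct and follows the paper's proof. The paper likewise takes the standard HJBE for a $C^1$ value function, $-\d_t\Psi=\min_{u}\big(\bra\d_v\Psi,(A_*+\cA\cdot u)v+c\ket+\frac12\|u\|_\Pi^2\big)$, rewrites the control term as $((\d_v\Psi v^\rT)\star\cA)^\rT u$, completes the square as in Theorem~\ref{th:opt1}, and reads off (\ref{Psitau}) from (\ref{DelRzd0}); your derivation from the optimality principle and your verification step are extra detail the paper omits.

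If you keep that extra detail, two points need fixing. First, the state equation is bilinear in $(U,z)$, not merely affine in $U$, so your coercivity bound on near-optimal controls needs $z$ to stay bounded uniformly in $U$ on short intervals. This holds here because $\cA\cdot u$ generates a unitary-conjugation flow that preserves a positive definite quadratic form. Second, a concatenated control is generally discontinuous, so it must be smoothed to remain in $C([t,\tau],\mR^r)$.
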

\begin{proof}
If $\Psi \in C^1([0,\tau]\x\mR^{n\x (n+1)}, \mR_+)$,  then it satisfies the HJBE whose form is specified 
by the dependence of the right-hand of the ODE (\ref{zdot}) on the control through (\ref{AAA}) and the control penalty in (\ref{Phi})  as
\begin{align}
\nonumber
    -\d_t \Psi
    &= 
    \min_{u \in \mR^r}
    \Big(
        \bra
        \d_v \Psi,
        (A_* + \cA\cdot u) v + c
    \ket  + 
    \frac{1}{2}
    \|u \|_\Pi^2
    \Big)\\
\nonumber
    &=  
        \bra
        \d_v \Psi,
        A_* v + c
        \ket
        +
    \min_{u \in \mR^r}
    \Big(
        ((\d_v \Psi v^\rT)
        \star \cA)^\rT 
        u
        + 
    \frac{1}{2}
    \|u \|_\Pi^2
    \Big)\\    
\label{HJBE+}
    &=  
        \bra
        \d_v \Psi,
        A_* v + c
        \ket
        -
        \frac{1}{2}    
        \|(\d_v \Psi v^\rT)
        \star \cA\|_{\Pi^{-1}}^2  , 
\end{align}
which is equivalent to the quadratically nonlinear PDE (\ref{HJBE0}). The minimum in (\ref{HJBE+}) is found as in the proof of Theorem~\ref{th:opt1} for the pointwise optimisation problem (\ref{lyap}) and, similarly to (\ref{Uhat}), is achieved at the unique point (\ref{uhat}). It now remains to note that the terminal value of $\Psi$ in 
(\ref{Psitau}) reproduces $\Delta(\tau)$ from (\ref{DelRzd0}) as a function of the terminal state $z(\tau)$ of the system (\ref{zdot}).
\end{proof}

In accordance with (\ref{AAA}), under the optimal control law $U(t) = \wh{u}(t, z(t))$ from (\ref{uhat}), the auxiliary dynamical system  (\ref{zdot}) is governed by a nonlinear ODE
\begin{equation}
\label{zdot2}
  \dot{z}(t) = (A_* + \cA \cdot \wh{u}(t,z(t)))z(t) + c. 
\end{equation}
The right-hand side of (\ref{HJBE+}), considered on the trajectory $v:= z(t)$ of (\ref{zdot2}),  coincides with the minimum value of the Pontryagin control Hamiltonian \cite{PBGM_1962,SW_1997}
\begin{align*}
    \cH
    & :=
    \Big(
        \bra
        \d_v \Psi,
        (A_* + \cA\cdot u) v + c
    \ket  + 
    \frac{1}{2}
    \|u \|_\Pi^2
    \Big)\Big|_{v=z, u =\wh{u}}\\
    & =
        \bra
        \d_v \Psi,
        A_*z + c
    \ket  
    -
    \frac{1}{2} \|\wh{u}\|_\Pi^2
\end{align*}
and remains constant over the time interval $[0,\tau]$.  The corresponding  minimum value of the cost functional (\ref{Phi}) is given by 
\begin{equation}
\label{PhiPsi}
    \min \Phi(\tau) = \Psi(0,z_0),
    \qquad
    z_0 \in \cZ, 
\end{equation}
in terms of (\ref{Psi}) and the initial condition $z_0$ of the auxiliary system (\ref{zdot}) from the set (\ref{cZ}). The quantity (\ref{Delhat1}) provides an upper bound for $\min \Phi(\tau)$ in (\ref{PhiPsi}), so that the pointwise optimal control  (\ref{Uhat}) can be regarded as a suboptimal solution to the problem (\ref{Phi}).   The solution of the terminal value problem for the HJBE (\ref{HJBE0}) can be approached by using a penalty matrix 
\begin{equation}
\label{Pieps}
    \Pi_\eps:= \frac{1}{2\eps}\Gamma
\end{equation}
whose ``shape'' and ``scale'' are specified by a given matrix $0 \prec \Gamma = \Gamma^\rT \in \mR^{r\x r}$ and a small scalar parameter $\eps>0$, respectively.  In the resulting one-parameter family of HJBEs
\begin{equation}
\label{HJBE2}
    \d_t \Psi_\eps
    + 
        \bra
        \d_v \Psi_\eps,
        A_* v + c
        \ket
        =
        \eps
        \|(\d_v \Psi_\eps v^\rT)\star \cA \|_{\Gamma^{-1}}^2 
\end{equation}
with the same terminal condition (\ref{Psitau}),  the right-hand side plays the role of a nonlinear perturbation of a first-order homogenous linear PDE $    \d_t \Psi_0 
    + 
        \bra
        \d_v \Psi_0,
        A_* v + c
        \ket = 0
$. The latter corresponds to the case of zero control $U=0$ in (\ref{zdot}), (\ref{AAA}) and is solved under the condition (\ref{Psitau}) for $\Psi_0$  by the method of characteristics as
\begin{equation}
\label{Psi0}
    \Psi_0(t,v)
    =
    \Bra
        R, 
        \re^{(\tau-t)A_*}v
        +
        \int_0^{\tau-t}
        \re^{sA_*}
        \rd sc
    \Ket + d,  
\end{equation}
where $\int_0^T
        \re^{sA_*}
        \rd s = (\re^{TA_*}-I_n)A_*^{-1}$ when $\det A_* \ne 0$.  
A solution of (\ref{HJBE2})  can then be obtained in the form of an asymptotic expansion 
\begin{equation}
\label{Psieps}
    \Psi_\eps(t,v) = \sum_{k=0}^{+\infty}\eps^k \Psi^{(k)}(t,v), 
    \quad
    0 \< t\< \tau, \
    v \in \mR^{n\x (n+1)},   
\end{equation}
where $\Psi^{(0)}:= \Psi_0$ is given by (\ref{Psi0}), and the subsequent terms are computed recursively by solving a nonhomogeneous  first-order linear PDE
\begin{equation}
\label{HJBE3}
    \d_t \Psi^{(k)}
    + 
        \bra
        \d_v \Psi^{(k)},
        A_* v + c
        \ket
        =
        \sum_{\ell=0}^{k-1}
        ((\d_v \Psi^{(\ell)} v^\rT)\star \cA)^\rT \Gamma^{-1}
        ((\d_v \Psi^{(k-\ell-1)} v^\rT)\star \cA) 
\end{equation}
with the terminal condition $\Psi^{(k)}(\tau,\cdot) = 0$ for all $k = 1, 2, 3, \ldots$.  Substitution  of (\ref{Pieps}), (\ref{Psieps}) into (\ref{uhat}) yields an asymptotic expansion for the corresponding optimal control law:
\begin{align}
\label{ueps}
    \wh{u}_\eps(t,v)
    & =
    \sum_{k=0}^{+\infty}
    \eps^{k+1} \wh{u}^{(k)}(t,v),\\
\label{uk}
    \wh{u}^{(k)}(t,v)
    & :=  
    - 2\Gamma^{-1}((\d_v \Psi^{(k)}(t,v) v^\rT)\star \cA). 
\end{align}
In particular, since (\ref{Psi0}) implies that  $\d_v \Psi_0(t,v) = \re^{(\tau-t)A_*^\rT} R$, so that 
\begin{equation}
\label{u0}
      \wh{u}^{(0)}(t,v)
     =  
    - 2\Gamma^{-1}((\re^{(\tau-t)A_*^\rT} R v^\rT)\star \cA), 
\end{equation}
then the PDE (\ref{HJBE3}) for the function $\Psi^{(1)}$ acquires the form 
\begin{equation}
\label{Psi1}
    \d_t \Psi^{(1)}(t,v)
     + 
        \bra
        \d_v \Psi^{(1)}(t,v),
        A_* v + c
        \ket
     = \|(\re^{(\tau-t)A_*^\rT} R v^\rT)\star \cA\|_{\Gamma^{-1}}^2.  
\end{equation}
Its solution $\Psi^{(1)}(t,v)$ and the corresponding control $u^{(1)}(t,v)$ in (\ref{uk}) are  quadratic functions of $v$ with time-varying coefficients which will be calculated elsewhere.  
The functions $\Psi_0 + \eps \Psi^{(1)}$ and $\eps \wh{u}^{(0)} + \eps^2 \wh{u}^{(1)}$, resulting from (\ref{Psi0}), (\ref{uk})--(\ref{Psi1}),  provide the first-order truncations of (\ref{Psieps}), (\ref{ueps}), respectively.

\section{CONCLUSION}
\label{sec:conc}

We have considered performance optimisation for a finite-level quantum memory system which is controlled through the time-varying  parameters of its Hamiltonian. The mean-square deviation of quantum system variables of interest from their initial values (to be retained by the system) has been represented in terms of an auxiliary classical dynamical system in a matrix space, which the control enters in a multiplicative fashion.  We have discussed the pointwise minimisation of its time derivative with a quadratic penalty  on the control, which can be regarded as a regularised version of the  gradient descent and speed gradient approaches to control-affine systems. The resulting control law is linear with respect to the state of the  auxiliary system, thus making the dynamics of the latter quadratically nonlinear. An alternative yet related optimal control design using a finite-horizon integral cost functional has also been considered. We have applied dynamic programming  and obtained a quadratically nonlinear HJBE with a recursively computed asymptotic expansion for its solution.

\end{document}